\numberwithin{equation}{section}
\newtheorem{thm}{Theorem}[section]
\newtheorem{cor}[thm]{Corollary}
\newtheorem*{thmlambda}{Theorem \ref{thm:lambda}}
\newtheorem*{thmtorsion}{Theorem \ref{thm:torsion}}
\newcommand{\CC}{\mathbb{C}}
\newcommand{\DD}{\mathbb{D}}
\newcommand{\ol}{\overline}
\newcommand{\la}{\langle}
\newcommand{\ra}{\rangle}
\newcommand{\e}{\varepsilon}
\newcommand{\p}{\partial}
\begin{document}

\title[Published in Mathematical Proceedings of the Royal Irish Academy]{Self-commutators of Toeplitz operators and isoperimetric inequalities}

\author[S. R. Bell]{Steven R. Bell}
\email{bell@math.purdue.edu}

\author[T. Ferguson]{Timothy Ferguson}
\email{tjferguson1@ua.edu}

\author[E. Lundberg]{Erik Lundberg}
\email{elundber@fau.edu}


\thanks{Citation for Published Version: S. Bell, T. Ferguson and E. Lundberg, Self-commutators of Toeplitz operators and
isoperimetric inequalities, Mathematical Proceedings of the Royal Irish Academy 114A (2014),
115--132; doi:10.3318/PRIA.2014.114.03 \vspace{1em} \\
This version differs slightly in apperance from the published one.}
\begin{abstract}

For a hyponormal operator, C. R. Putnam's inequality gives an upper bound on the norm of its self-commutator.
In the special case of a Toeplitz operator with analytic symbol in the Smirnov space of a domain, 
there is also a geometric lower bound shown by D. Khavinson (1985) that when combined with Putnam's inequality implies the classical isoperimetric inequality.
For a nontrivial domain, we compare these estimates to exact results.
Then we consider such operators acting on the Bergman space of a domain, and we obtain lower bounds
that also reflect the geometry of the domain.
When combined with Putnam's inequality
they give rise to the Faber-Krahn inequality for the fundamental frequency of a domain
and the Saint-Venant inequality for the torsional rigidity (but with non-sharp constants).
We conjecture an improved version of Putnam's inequality within this restricted setting.
\end{abstract}

\maketitle

\section{Introduction}

Let $H$ be a Hilbert space and $T: H \rightarrow H$ a bounded linear operator.
Let $sp(T)$ denote the spectrum of $T$ defined as $sp(T) := \{\mu \in \CC : T - \mu I \text{ is not invertible} \}$,
where $I$ denotes the identity operator.
If $T$ is normal, then its self-commutator $[T^*,T] := T^*T - T T^*$ vanishes.
If $T$ is not normal, then $|| [T^*,T] ||$ is a measure of ``how far'' $T$ is from normal.

If $[T^*,T] \geq 0$, meaning $\langle [T^*,T] x , x \rangle \geq 0$ for every $x \in H$, then Putnam's inequality \cite{Putnam70} states

\begin{equation}\label{eq:putnam}
 || [T^*,T] || \leq \frac{\text{Area}(sp(T))}{\pi}.
\end{equation}

For instance, suppose $G$ is a bounded domain in the plane with piecewise-smooth boundary, 
and $T_{\phi}:E^2(G) \rightarrow E^2(G)$ is the Toeplitz operator with symbol $\phi$ analytic in a neighborhood of $\ol{G}$,
and $E^2(G)$ is the Smirnov space (see \cite[Chapter 10]{Duren70}).
Then $[T_{\phi}^*,T_{\phi}]$ is positive so that Putnam's inequality applies, and by the spectral mapping theorem \cite{Douglas72},
$sp(T_{\phi}) = \overline{\phi(G)}$, so
$$ || [T_{\phi}^*,T_{\phi}] || \leq \frac{\text{Area} \left( \phi(G) \right)}{\pi} . $$

Within this restricted setting a lower bound was also provided by D. Khavinson \cite{Khav}:
\begin{equation}\label{eq:khav}
|| [T_{\phi}^*,T_{\phi}] || \geq \frac{ 4 \text{Area}^2 \left( \phi(G) \right)}{||\phi'||_{2}^2 \cdot P(G)},
\end{equation}
where $P(G)$ denotes the perimeter of $G$ and $||\cdot||_{2}$ is the norm in the space $E^2(G)$.

A charming consequence of the above follows from setting $\phi(z) = z$ (so we have $||\phi'||_2^2 = ||1||_2^2 = P(G)$), 
and combining Putnam's inequality (\ref{eq:putnam}) with the lower bound (\ref{eq:khav}).
Namely, we obtain
$$ P^2(G) \geq 4\pi \text{Area}(G),$$
which is the classical isoperimetric inequality with a sharp constant.

The current study was inspired by the strong interaction between functional and geometric analysis that is apparent
in the statement of the above results. 

In Section \ref{sec:dqd}, we will revisit the above results in a computational setting (again in the context of Smirnov space).
We take $G$ to be a quadrature domain.
This choice leads to a finite rank commutator
whose matrix can be calculated explicitly.
Within a one parameter family of such examples, 
we compare the estimates mentioned above to the exact value of the norm of the commutator.

In Section \ref{sec:berg}, we prove our main results 
which pertain to Toeplitz operators with analytic symbol but in the setting of the Bergman space of $G$.
We provide a lower bound for the norm of the commutator.
Again the geometry of the spectrum of $T$ comes into play, 
but in addition to its area, the bound depends on the fundamental frequency (i.e., the first Dirichlet eigenvalue of the Laplacian).
Let $G_\phi$ denote the Riemann surface formed by sheets over $\phi(G)$. 
Then we prove the following.

\newcommand{\thmlambdatext}{
Suppose that $G$ is a bounded domain in the plane, and $\phi$ is analytic and locally univalent in $G$.
Consider the Bergman space $A^2(G)$.
Let $T_{\phi}:A^2(G) \rightarrow A^2(G)$ be the Toeplitz operator $T_{\phi}(f) = \phi \cdot f$ with symbol $\phi$.
Then we have the lower bound
\[
||[T_{\phi}^*,T_{\phi}] || \ge 
\sup_{\substack{W \subset G_\phi \\ \phi^{-1}(W) \text{\rm\  piecewise $C^1$}}} 
\frac{4^2 \pi}{\lambda_W^2 \text{Area}(G_\phi)},
\]
where $\lambda_W$ is the first eigenvalue of the Laplacian on $W$,
and $G_{\phi}$ is the Riemann surface for $\phi$ formed by sheets over $\phi(G)$.
In particular, if $G$ has smooth boundary, we have 
\[
||[T_{\phi}^*,T_{\phi}] || \ge 
\frac{4^2 \pi}{\lambda_{G_\phi}^2 \text{Area}(G_\phi)}.
\]
}

\begin{thm}\label{thm:lambda}
\thmlambdatext
\end{thm}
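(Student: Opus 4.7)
Plan. The proof is a duality argument that converts the operator-norm estimate into a minimization problem for the $\bar\partial$-equation. First I would reduce to the model operator $M_w$ on $A^2(G_\phi)$: the unitary equivalence $f \mapsto F = (f/\phi')\circ\phi^{-1}$ identifies $A^2(G)$ with $A^2(G_\phi)$ isometrically and intertwines $T_\phi$ with multiplication by the coordinate, so it suffices to bound $\|[M_w^*, M_w]\|_{A^2(G_\phi)}$ from below.

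The central identity $\langle [M_w^*, M_w] F, F\rangle = \|(I-P)(\bar w F)\|_{L^2(G_\phi)}^2$, with $P$ the Bergman projection, combined with the observation $\bar\partial(\bar w F) = F$, says that $u := (I-P)(\bar w F)$ is the minimum $L^2$-norm solution of $\bar\partial u = F$ on $G_\phi$. To lower-bound $\|u\|$ I would test against the first Dirichlet eigenfunction $\psi$ of $W$, normalized by $\|\psi\|_{L^2(W)} = 1$ and extended by zero to $G_\phi$; the piecewise $C^1$ hypothesis on $\phi^{-1}(W)$ renders this extension a valid element of $H^1_0(G_\phi)$. Integration by parts gives $\langle F, \psi\rangle = -\langle u, \partial \psi\rangle$, and since $\|\partial\psi\|^2 = \tfrac14\|\nabla\psi\|^2 = \lambda_W/4$ (because $\psi$ is real), Cauchy-Schwarz yields $\|u\|^2 \ge (4/\lambda_W)|\langle F, \psi\rangle|^2$.

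To close the bound I would invoke the Payne-Rayner inequality $\bigl(\int_W \psi\bigr)^2 \ge 4\pi/\lambda_W$ for planar Dirichlet ground states, and then choose $F \in A^2(G_\phi)$ so that $|\langle F, \psi\rangle|^2 / \|F\|^2_{A^2(G_\phi)}$ captures the factor $4\pi/(\lambda_W\,\mathrm{Area}(W))$. The most transparent test, $F \equiv 1$, delivers a bound of the form $16\pi/(\lambda_W^2 \mathrm{Area}(G_\phi))$---exactly the conclusion when $W = G_\phi$, which is already the regime where the supremum over $W$ is typically attained. The main obstacle I anticipate is sharpening the denominator from $\mathrm{Area}(G_\phi)$ to $\mathrm{Area}(W)$ for proper sub-domains; this will require $F$ adapted to $W$ (for instance a normalized reproducing kernel of $G_\phi$ centered in $W$, or a contractive embedding argument exploiting the inclusion $A^2(G_\phi) \hookrightarrow A^2(W)$) rather than the constant test. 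Once the bound is secured for each admissible $W$, taking the supremum is immediate.
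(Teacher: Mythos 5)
Your proposal is, after the change of variables $\Lambda : A^2(G_\phi) \to A^2(G)$, $\Lambda F = (F\circ\phi)\,\phi'$, the same argument as the paper's.  The paper works on $A^2(G)$ directly: it writes $\langle[T_\phi^*,T_\phi]h,h\rangle = \mathrm{dist}^2(\bar\phi h, A^2(G))$, takes $h = \phi'/\|\phi'\|_2$ (which is exactly your constant $F$ transported by $\Lambda$), pairs against $g = \partial_z\psi/\|\partial_z\psi\|_2$ with $\psi = \psi_W\circ\phi$ (your first Dirichlet eigenfunction pulled back), eliminates the cross term with Green's theorem (your integration by parts against $\bar\partial u = F$), and then invokes the Rayleigh identity $\|\partial_z\psi_W\|_2 = \tfrac{\sqrt{\lambda_W}}{2}\|\psi_W\|_2$ and the Payne--Rayner inequality $\|\psi_W\|_1/\|\psi_W\|_2 \ge 2\sqrt{\pi}/\sqrt{\lambda_W}$ just as you do.  Your reformulation of $\mathrm{dist}^2(\bar w F, A^2)$ as the squared norm of the minimal $L^2$ solution of $\bar\partial u = F$ is a tidy way to package the same computation and makes the role of the $\psi$-pairing transparent.

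The point you flag at the end deserves real scrutiny, and you should not wave it away with ``the supremum is typically attained at $W = G_\phi$.''  Running the argument with $F\equiv 1$ (equivalently $h = \phi'/\|\phi'\|_2$, so $\|h\|^2 = \mathrm{Area}(G_\phi)$) produces
\[
\|[T_\phi^*,T_\phi]\| \;\ge\; \frac{16\pi}{\lambda_W^2\,\mathrm{Area}(G_\phi)},
\]
with $\mathrm{Area}(G_\phi)$, not $\mathrm{Area}(W)$, in the denominator --- and this is also what the paper's own chain of inequalities actually delivers, since the paper never replaces the normalizing factor $\|\phi'\|_{A^2(G)}^2 = \mathrm{Area}(G_\phi)$ by anything $W$-dependent before writing down the final display with $\mathrm{Area}(W)$.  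The distinction is not cosmetic: for a long thin domain (an ellipse with $a\gg b$, say) the two quantities diverge, and the paper's subsequent ellipse example after Corollary~\ref{cor:geo} tacitly relies on the $\mathrm{Area}(W)$ form.  To actually obtain $\mathrm{Area}(W)$ one would need a test function $F$ that concentrates its mass on $W$ while remaining analytic on all of $G_\phi$, i.e.\ one of the devices you mention (localized reproducing kernels, or an honest restriction argument), and none of these is spelled out.  So the concern you raise is a genuine gap in the argument for $W\subsetneq G_\phi$, and you are right that closing it requires a new idea beyond the constant test function; for $W = G_\phi$ your proof (and the paper's) is complete.

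One small slip: the supremum over $W$ is in fact \emph{not} always attained at $W = G_\phi$.  For a long rectangle or ellipse, $\lambda_{G_\phi}$ stays bounded while $\mathrm{Area}(G_\phi)\to\infty$, so $1/(\lambda_{G_\phi}^2\,\mathrm{Area}(G_\phi))\to 0$, whereas the inscribed disc gives a bound that stays bounded away from $0$.  This is precisely the regime where the $\mathrm{Area}(W)$ refinement has content, which is why the issue cannot be dismissed as marginal.
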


Consider the case $\phi = z$, and assume $G$ has smooth boundary.
Taking $W=G$ and combining the resulting inequality in the theorem with Putnam's inequality (\ref{eq:putnam}),
we obtain
\[
\frac{\text{Area}(G)}{\pi} \ge
||[T_z^*,T_z] || \ge 
\frac{4^2 \pi}{\lambda^2 \text{Area}(G)},
\]
where $\lambda$ is the first eigenvalue for $G$.

In particular, this gives a bound on $\lambda$:

$$\lambda \ge \frac{4 \pi}{\text{Area}(G)}.$$

Except for the constant, which is not sharp, this is the inequality associated with the Faber-Krahn Theorem
which states that, with fixed area, the drum with the lowest base tone is given by a disc.
The precise form is inequality (19) in \cite{Osserman78}:

$$\lambda \ge \frac{j^2 \pi}{\text{Area}(G)},$$
where $j \approx 2.4$ is the first positive zero of the Bessel function $J_0$.

\newcommand{\thmtorsiontext}{
Suppose that $G$ is a bounded domain in the plane, and $\phi$ is analytic 
in $G$.
Consider the Bergman space $A^2(G)$.
Let $T_{\phi}:A^2(G) \rightarrow A^2(G)$ be the Toeplitz operator $T_{\phi}(f) = \phi \cdot f$ with symbol $\phi$.
Then we have the lower bound
\[
||[T_{\phi}^*,T_{\phi}] || \ge
\sup_{\substack{W \subset G_{\phi}\\ \phi^{-1}(W) \text{\rm\ piecewise $C^1$}}} 
\frac{\rho_W}{\text{Area}(G_\phi)},
\]
where $\rho_W$ is the torsional rigidity of $W$.  
In particular, if $G$ has smooth boundary, we have 
\[
||[T_{\phi}^*,T_{\phi}] || \ge
\frac{\rho_{G_\phi}}{\text{Area}(G_\phi)}.
\]
}

We also prove a related theorem involving the \emph{torsional rigidity}.
The torsional rigidity of a plane domain $W$ is the supremum of the quotient
$$ \frac{4 \left( \int_W \psi dA \right)^2}{\int_W |\nabla \psi|^2 dA} , $$
taken over all functions $\psi$ continuously differentiable in $W$ and vanishing on the boundary.
From elasticity theory, for a beam of cross section $W$, the torsional rigidity of $W$ is a measure of the beam's resistance to twisting.

\begin{thm}\label{thm:torsion}
\thmtorsiontext
\end{thm}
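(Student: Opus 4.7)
The plan is to follow the same template as Theorem~\ref{thm:lambda}: express the commutator quadratic form as a distance in $L^2(G)$, test that distance against a particular element of $A^2(G)^\perp$ built from a Sobolev function on $W$, and then close the estimate using the variational characterization of $\rho_W$ in place of the Rayleigh quotient used for $\lambda_W$.

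The starting point is the identity
\[\langle [T_\phi^*, T_\phi] f, f\rangle \;=\; \|\bar\phi f\|^2 - \|P(\bar\phi f)\|^2 \;=\; \|(I-P)(\bar\phi f)\|^2 \;=\; \operatorname{dist}\bigl(\bar\phi f,\,A^2(G)\bigr)^2,\]
where $P$ is the Bergman projection. For any real $v \in H_0^1(G)$, integration by parts shows $\partial v := \partial v/\partial z$ lies in $A^2(G)^\perp$ (pairing against any $h \in A^2$ gives $-\int v\,\overline{\bar\partial h}\, dA = 0$). Projecting $\bar\phi f$ onto the single direction $\partial v$, together with $\bar\partial(\bar\phi f) = \overline{\phi'}f$ and $\|\partial v\|^2 = \tfrac14\|\nabla v\|^2$ for real $v$, yields
\[\langle [T_\phi^*, T_\phi] f, f\rangle \;\ge\; \frac{4\,\bigl|\int_G \overline{\phi'}\,f\,v\, dA\bigr|^2}{\|\nabla v\|_G^2}.\]

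I would then specialize the test data. Given $W \subset \phi(G)$ with $\phi^{-1}(W)$ piecewise $C^1$, let $u$ be the torsion function of $W$, so that $\int_W u = \rho_W$ and $\int_W |\nabla u|^2 = 2\rho_W$. Define $v = u\circ\phi$ on $\phi^{-1}(W)$ and extend by zero; the regularity assumption puts $v \in H_0^1(G)$, and the change of variables $w=\phi(z)$ (using $dA_w = |\phi'|^2\, dA_z$ and local univalence) gives $\|\nabla v\|_G^2 = 2\rho_W$, with the analogous statement on $G_\phi$ in the multi-sheeted case. For $f$ I would use the isometry $F \mapsto F(\phi)\phi'$ from $A^2(\phi(G))$ into $A^2(G)$, which simultaneously simplifies the numerator to $\int_W F u\,dA$ and the denominator to $\|F\|_{L^2(\phi(G))}^2$. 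Optimizing over $F$ and invoking
\[\rho_W \;=\; \sup_{w \in H_0^1(W)} \frac{2\bigl(\int_W w\, dA\bigr)^2}{\int_W |\nabla w|^2\, dA}\]
(with equality at $w=u$) should produce the desired quotient $\rho_W/Area(W)$.

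The main obstacle will be the last, localizing step. The naive choice $F \equiv 1$ yields only $2\rho_W/Area(\phi(G))$, since $\|F\|^2$ then picks up the full ambient area. To obtain $Area(W)$ in the denominator one must concentrate the test function on $W$ — for instance by taking $F = P_{A^2(\phi(G))}\chi_W$ and combining the resulting projection bound with Cauchy–Schwarz on $W$ alone, or by approximating a localized bump by a holomorphic function on $\phi(G)$. Carrying this out carefully, and verifying that the identifications survive the descent from $G_\phi$ to $G$ when $\phi$ is multi-sheeted, is where the real work of the proof lies.
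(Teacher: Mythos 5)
Your setup reproduces the paper's argument faithfully: express the quadratic form of the self-commutator as $\mathrm{dist}^2(\bar\phi f, A^2(G))$, observe $\partial v\in A^2(G)^\perp$ for real $v\in W^{1,2}_0$, integrate by parts to get the lower bound
\[
\langle [T_\phi^*,T_\phi]f,f\rangle \ge \frac{4\left|\int_G \overline{\phi'} f\, v\, dA\right|^2}{\|\nabla v\|^2_{L^2(G)}},
\]
and pull back the torsion function $u$ of $W$ to $v=u\circ\phi$. This is exactly the machinery the paper uses; the paper's proof of Theorem~\ref{thm:torsion} is literally the remark after the proof of Theorem~\ref{thm:lambda} that the same computation with $\psi$ chosen to maximize $(2\|\psi\|_1/\|\nabla\psi\|_2)^2$ gives the torsional rigidity.

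The ``main obstacle'' you flag at the end, however, deserves a sharper reading. You write that the naive choice $F\equiv 1$ (that is, $f=\phi'/\|\phi'\|_{L^2(G)}$, exactly what the paper takes) yields $\rho_W/\mathrm{Area}(\phi(G))$, and that one must concentrate $F$ on $W$ to replace the denominator by $\mathrm{Area}(W)$. That observation is correct, but you should be aware that \emph{the paper does not perform this localization either}. Tracing the paper's own proof, the test function $h=\phi'/\|\phi'\|_{L^2(G)}$ contributes the factor $\|\phi'\|_{L^2(G)}^{-2}=\mathrm{Area}(G_\phi)^{-1}$, and the Green's-theorem/change-of-variables steps that produce $\int_W \psi_W\,dA$ and $\|\partial_z\psi_W\|_{L^2(W)}$ do not touch that factor. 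So what the paper's argument actually delivers is $\|[T_\phi^*,T_\phi]\|\ge \rho_W/\mathrm{Area}(G_\phi)$, which agrees with the stated theorem only for $W=G_\phi$ (the case used for the Faber--Krahn and Saint-Venant applications, but not for Corollary~\ref{cor:geo}). Your instinct that the step is missing is right; it is missing from the source too, so you need not apologize for it. Replacing $F\equiv 1$ by $F = P_{A^2(\phi(G))}\chi_W$ amounts to maximizing $|\langle u\chi_W,\bar F\rangle|^2/\|F\|^2$, i.e.\ to lower-bounding $\|P_{A^2(\phi(G))}(u\chi_W)\|^2$ by $(\int_W u)^2/\mathrm{Area}(W)$; this is not a consequence of Cauchy--Schwarz and is genuinely nontrivial (Cauchy--Schwarz over $W$ gives the unprojected bound $\|u\chi_W\|_{L^2}^2\ge(\int_W u)^2/\mathrm{Area}(W)$, and projecting only decreases the left side). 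So that route, as you suspected, does not obviously close the gap. If your goal is to match what the paper actually proves, stop at $F\equiv 1$, state the bound with $\mathrm{Area}(G_\phi)$ in the denominator, and note that setting $W=G_\phi$ recovers the theorem as used in the applications.
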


If we combine the above theorem with Putnam's inequality, using 
$\phi(z)=z$, we obtain the bound 
\begin{equation}
 \label{eq:SV}
\rho \leq \frac{\text{Area}^2(G)}{\pi}.
\end{equation}

Saint-Venant's inequality, which is sharp, states that \cite[p. 121]{PolyaSzego}
\[
\rho \leq \frac{\text{Area}^2(G)}{2\pi}.
\]
Thus, our result combined with Putnam's inequality recovers Saint-Venant's inequality, except for a non-sharp constant.

\noindent {\bf Added in press:} It turns out that the deficiency in recovering the precise form of Saint-Venant's inequality is entirely due to Putnam's inequality not being sharp in
this setting. 
This was our conjecture (see the first concluding remark of this paper)
that was recently proved by J-F. Olsen and M. C. Reguera \cite[Thm. 1, Cor. 1]{OlsenReguera2013}.
Their improvement of Putnam's inequality by a factor of 2 within this setting, along
with our Theorem 1.2, provides a new proof of the Saint-Venant inequality (with sharp constants).

\noindent {\bf Acknowledgement:} We would like to thank Dmitry Khavinson for suggesting to us the problem
of obtaining a lower bound in the context of Bergman space and for valuable suggestions.
We would also like to thank Tom Carroll for helpful suggestions and for informing us of Saint-Venant's inequality.

\section{Multiplication by $z$ and Double Quadrature Domains}\label{sec:dqd}

Let us revisit the case of Smirnov spaces discussed in the introduction.
We will fix our attention on the operator $T_z$ of multiplication by $\phi = z$.
It has already been noticed that quadrature domains give rise to self-commutators of finite rank (\cite{McCarthyYang95}, \cite{McCarthyYang97}, and \cite{ConYan98}).
In this section, we will use this fact to study a specific example.
Actually, we take our domain to be a so-called \emph{double quadrature domain}
which further simplifies computations associated with $[T_z^*,T_z]$ and its norm.

Recall that a bounded domain $G \subset \CC$ is called an \emph{area quadrature domain} if it admits a formula expressing the area integral of 
any function $f$ analytic and integrable in $G$ as a finite sum of weighted point evaluations of the function and its derivatives.
i.e. 
\begin{equation}\label{eq:QF1}
 \int_{\Omega} {f dA} = \sum_{m=1}^{N} \sum_{k=0}^{n_m}{a_{m,k}f^{(k)}(z_m)},
\end{equation}
where $z_m$ are distinct points in $\Omega$ and $a_{m,k}$ are constants (possibly complex) independent of $f$.
In order to define \emph{arclength quadrature domains}, such a formula is prescribed to hold for integration over the boundary with respect to arclength instead of area measure.
A domain that is in both classes is called a \emph{double quadrature domain}.
We emphasize that these domains are very special, since a finite formula is prescribed to hold for an infinite dimensional space of functions.
On the other hand, they are not too special, since even double quadrature domains can be used to approximate any reasonable domain \cite{Bell2012}.
The boundary of an area quadrature domain always has a Schwarz function $S(z)$, i.e., a function complex analytic near $\p G$ that coincides with $\bar{z}$ on $\p G$.
Furthermore, $S(z)$ extends meromorphically into $G$ if $G$ is an area quadrature domain.
When $G$ is a double quadrature domain, 
we can describe a procedure for writing down the entries in a finite matrix representation for $[T_z^*,T_z]$.

As observed in \cite{McCarthyYang95}, the commutator $[T_z^*,T_z]$ has finite rank.
Let us review the details of this fact.
Choose a polynomial $t(z)$ with zeros that cancel the poles of the Schwarz function $S(z)$ in $\ol{G}$,
and decompose each polynomial 
$$p(z) = t(z) q(z) + r(z)$$
using polynomials $q(z)$ and $r(z)$, where the degree of the remainder $r(z)$ is less than the degree of $t(z)$.
Then on $\p G$, the Schwarz function (by definition) matches $\bar{z}$, 
and therefore $\bar{z} t(z)$ matches an analytic function.
This implies that $t(z)q(z)$ is in the kernel of $[T_z^*,T_z]$.
To see this, 
first recall that 
$$T^*_{z} g = P_{E^2(G)}( \bar{z} g ),$$
where $P_{E^2(G)}$ denotes projection onto $E^2(G)$.
Indeed, for any $g,h \in E^2(G)$, we have 
$$ \la T_{z}^* g ,  h \ra =  \la g , T_{z} h \ra = \la g , z h \ra = \la \bar{z} g , h \ra = \la P_{E^2(G)}( \bar{z} g) , h \ra .$$
Since, $\bar{z} t(z)$ matches an analytic function on $\partial G$,
\begin{align*}
 [T_z^*,T_z] t(z) q(z) &= T_z^*T_z t(z) q(z) - T_z T_z^* t(z) q(z) \\
  &= P_{E^2(G)}( \bar{z} z t(z) q(z) ) - z P_{E^2(G)}( \bar{z} t(z) q(z) ) \\
  &= 0 .\\
\end{align*}
Thus, we only need to consider $[T_z^*,T_z]$ acting on the finite dimensional space of remainders which we denote by $R$.
Since $G$ is a double quadrature domain, for each polynomial $r(z) \in R$,
 $[T_z^*,T_z] r(z)$ is a rational function in $z$ and $\bar{z}$ (see \cite[Section 6]{Bell2012}).
Proceed by orthonormalizing $R$ and its image under $[T_z^*,T_z]$.
Note that any inner products arising in the Gram-Schmidt process reduce to integrals that can be calculated using residues
since $G$ is an arclength quadrature domain (again, see \cite[Section 6]{Bell2012}).
Having found an orthonormal basis for $R$ and its image, the entries in the matrix can be calculated by representing
the image of each basis element in terms of the orthonormal basis.

Let us illustrate this with a specific example.
We will then compare the exact value of the norm to the upper and lower estimates mentioned in the introduction.

\noindent {\bf Example:} The following is a simple non-trivial double quadrature domain.
Let $\e >0$ be a real parameter, 
and take the domain $G$ that is the image of the unit disc under the polynomial conformal map $F(w) = w + \e w^2 + \e^2 w^3 /3$ 
(in order for $F$ to be univalent, $\e$ must be sufficiently small).
The Schwarz function of $G$ is given by the formula \cite{Davis}
$$S(z) = F^* \left( \frac{1}{F^{-1}(z)} \right),$$
where $F^*$ denotes conjugation of the coefficients of $F$ (which does nothing in this case since the coefficients of $F$ are real).
From this formula, it can be verified (since the order of the pole of $F^*(1/w)$ is not increased by composition with the conformal map $F^{-1}$) 
that $S(z) = \frac{A}{z^3} + \frac{B}{z^2} + \frac{C}{z} + h(z) ,$
with $h(z)$ analytic in $G$.
Using $t(z) = z^3$ to cancel the pole of $S(z)$, each polynomial can be factored as $p(z) = t(z) q(z) + r(z)$ 
with remainder $r(z)$ of at most second-degree.

For this example we can utilize the explicit conformal map $F$ to further simplify the process described above;
we work in the unit disc using the isometric isomorphism $\Lambda : E^2(G) \rightarrow E^2(\DD) $ induced by $F$.
Namely, $\Lambda$ is defined by $\Lambda \phi = \sqrt{F'} \cdot ( \phi \circ F )$ (see \cite[Chapter 12]{Bellbook} for details).
Notice that for polynomials $r(z)$ of at most second degree, 
$$\Lambda r = \sqrt{F'} \cdot (r \circ F) = (1+ \e w) \cdot r(w+ \e w^2 + \e^2 w^3 / 3 )$$ 
is a polynomial of degree at most $7$.
Thus, Span$(1,w,w^2,..,w^7)$ contains $\Lambda R$,
and the norm of $[T^*,T]$ coincides with the norm of the induced action of $[T^*,T]$ on Span$(1,w,w^2,..,w^7)$.
Moreover, in the disc the monomials $w^k$ give an orthonormal basis.

In order to describe this induced action, 
suppose $\Lambda r(z) = w^k$ and apply $\Lambda$ to $[T_z^*,T_z] r$:
\begin{align*}
 \Lambda [T_z^*,T_z] r &= \Lambda P_{E^2(G)}(z \bar{z} r ) - \Lambda ( z P_{E^2(G)}(\bar{z} r ) ) \\
  &= \Lambda P_{E^2(G)}(z \bar{z} r ) - \sqrt{F'} F [ P_{E^2(G)}(\bar{z} r ) ] \circ F  \\
  &= \Lambda P_{E^2(G)}(z \bar{z} r ) -  F \cdot \Lambda P_{E^2(G)}( \bar{z} r ) \\
  &= P_{E^2(\DD)} ( \Lambda z \bar{z} r ) - F \cdot P_{E^2(\DD)}( \Lambda( \bar{z} r ) ) \\
  &= P_{E^2(\DD)} ( F \bar{F} \Lambda r ) - F \cdot P_{E^2(\DD)}( \bar{F} \Lambda r ) \\
  &= P_{E^2(\DD)} ( F(w) \ol{F(w)} w^k ) - F(w) \cdot P_{E^2(\DD)}( \ol{F(w)} w^k ) , \\
\end{align*}
which is also a polynomial.
Above, we have used the fact that $P_{E^2(\DD)} \Lambda = \Lambda P_{E^2(G)}$ \cite[Theorem 12.3]{Bellbook}.

Next we write down a matrix $M$ representing this action on Span$(1,w,w^2,..,w^7)$.
First note that this matrix is in fact $3 \times 3$.
Indeed, if $\Lambda r = w^k$ for an integer $k>2$, then by a direct calculation using $\bar{w} = 1 / w$ on $\p \DD$
$$\Lambda [T_z^*,T_z] r = P_{E^2(\DD)} ( F(w) \ol{F(w)} w^k ) - F(w) \cdot P_{E^2(\DD)}( \ol{F(w)} w^k ) = 0.$$
Moreover, for $\Lambda r = w^k$ with $k = 0,1,2$ it will turn out that the degree of $\Lambda [T_z^*,T_z] r$ is at most two,
so that we can expand it also in terms of the basis $\{1,w,w^2 \}$.
Let us now calculate the entries in the matrix $M$:

$$M = \left[
\begin{array}{ccc} 	
1 + \e^2 + \frac{\e^4}{9} & \e + \frac{\e^2}{3} & \frac{\e^2}{3} \\
\e + \frac{\e^2}{3} & \e^2 + \frac{\e^4}{9} & \frac{\e^3}{3} \\
\frac{\e^2}{3} & \frac{\e^3}{3} & \frac{\e^4}{9} \\
\end{array}
\right] .$$
The first, second, and third columns are the coefficients in the basis $\{1,w,w^2 \}$ of $\Lambda [T_z^*,T_z] r $, where $\Lambda r$ is $1$, $w$, and $w^2$ respectively.

For the convenience of the reader, we give the details of the computation for the third row.  Thus taking $\Lambda r = w^2$, we obtain

\begin{align*}
 \Lambda [T_z^*,T_z] r &= P_{E^2(\DD)}(F(w) \cdot \ol{F(w)} \cdot w^2) - F(w) \cdot P_{E^2(\DD)}(\ol{F(w)}\cdot w^2) \\
  &=  P_{E^2(\DD)}(F(w) \cdot (w + \e + \frac{\e^2}{3}\bar{w})) - F(w) \cdot P_{E^2(\DD)}(w + \e + \frac{\e^2}{3} \bar{w}) \\
  &= P_{E^2(\DD)}(F(w) \cdot (w + \e)) +P_{E^2(\DD)}(F(w) \cdot \frac{\e^2}{3}\bar{w}) - F(w) \cdot (w + \e ) \\
  &= P_{E^2(\DD)}(F(w) \cdot \frac{\e^2}{3} \bar{w}) = \frac{\e^2}{3} + \frac{\e^3}{3}w + \frac{\e^4}{9}w^2 . \\
\end{align*}

Extracting the coefficients, we obtain the third column in the matrix $M$.

Since $M$ is symmetric, the norm of $M$ is the absolute value of the largest eigenvalue. 
Solving the characteristic polynomial perturbatively in $\e$, we obtain the expansion in $\e$ of the norm 
$$||[T_z^*,T_z]|| = 1 + 2 \e^2 - \frac{1}{9}\e^4 + \frac{2}{3}\e^6 + O(\e^8) .$$

Let us compare this to the upper and lower bounds for $|| [T_z^*,T_z] ||$ provided by Putnam and Khavinson (respectively).
To this end, we calculate the area and perimeter of $G$.
$$\text{Area}(G) = \int_{\DD} F'(w) \cdot \ol{F'(w)} dA(w) = \int_{\DD} (1+\e w)^2(1+\e \bar{w})^2 dA(w).$$
Expanding the integrand and discarding terms that integrate to zero,
$$\int_{\DD} (1+\e w)^2(1+\e \bar{w})^2 dA(w) = \int_{\DD} 1 + 4 \e^2|w|^2+ \e^4 |w|^4  dA(w).$$
Using polar coordinates to calculate this last integral we get
$$\text{Area}(G) = \pi(1+ 2\e^2+\frac{\e^4}{3}).$$

Calculating also the perimeter,
$$P(G) = \int_{\p \DD} |F'(w)| ds = \int_{\p \DD} (1+\e w)(1+\e\bar{w}) ds =  \int_{\p \DD} 1 + \e^2 |w|^2 ds = 2 \pi (1+ \e^2).$$

Since $\phi = z$, $|| \phi' ||_{2}^2$ is the perimeter of $G$, and the lower bound (\ref{eq:khav}) mentioned in the introduction becomes 
$$||[T_z^*,T_z]|| \geq  \frac{4 \text{Area}(G)}{P(G)^2} = \frac{4 (1+2\e^2+\frac{\e^4}{3})^2}{(2 \pi (1+\e^2))^2} = \left(1+ \frac{\e^2}{3} \left(1+\frac{2}{1+\e^2} \right) \right)^2.$$

Combining this with Putnam's inequality (\ref{eq:putnam}), we have

$$ \left(1+ \frac{\e^2}{3} \left(1+\frac{2}{1+\e^2} \right) \right)^2 \leq ||[T_z^*,T_z]|| \leq (1+2\e^2+\frac{\e^4}{3}) .$$

\begin{figure}[h]
    \begin{center}
    \includegraphics[scale=.3]{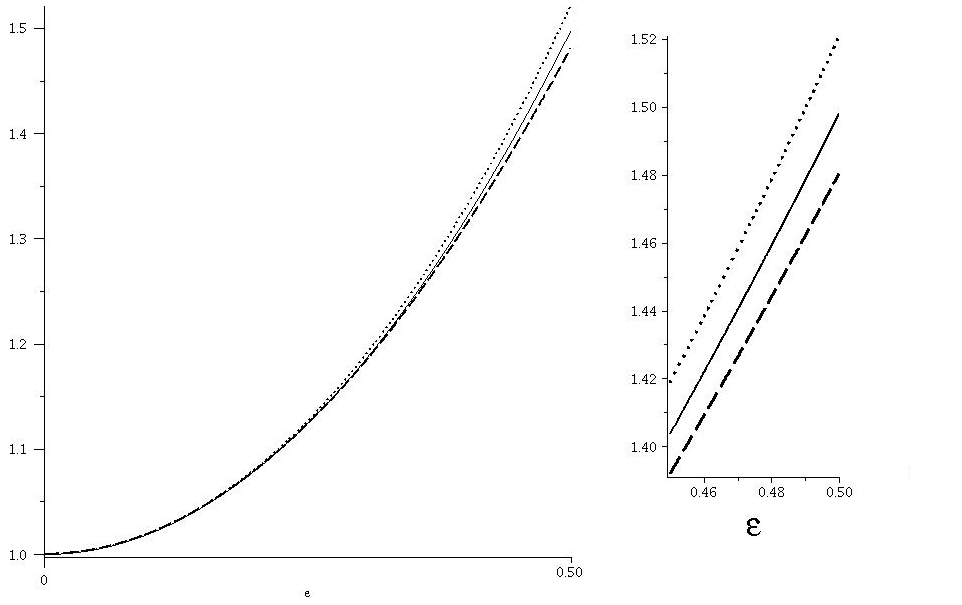}
    \end{center}
    \caption{The norm of $[T_z^*,T_z]$ (solid), and the lower (dashed) and upper (dotted) bounds plotted against $\e$.}
    \label{fig:Norm}
\end{figure}

\begin{figure}[h]
    \begin{center}
    \includegraphics[scale=.25]{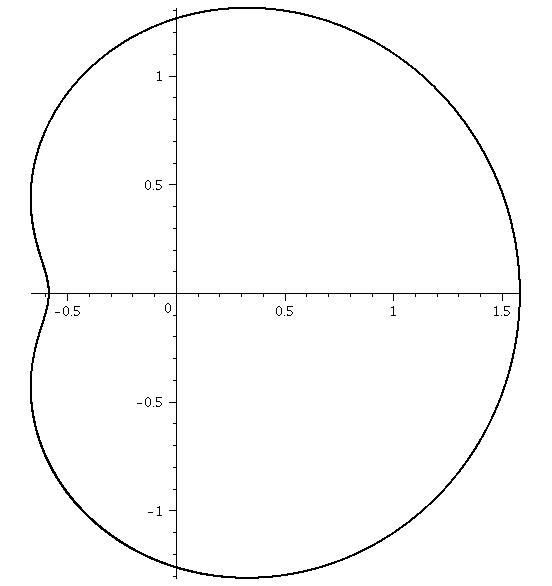}
    \end{center}
    \caption{The domain $G$ when $\e = 0.5$.}
    \label{fig:dqd}
\end{figure}

Figure \ref{fig:Norm} shows a plot of $||[T_z^*,T_z]||$ with respect to $\e$ along with the upper and lower bounds,
which provide a remarkably good estimate even as the geometry of $G$ deviates from a disc (see Figure \ref{fig:dqd}).

In general, even for quadrature domains, the upper and lower bounds need not exhibit such accurate estimates as in this case.
Our main message in this section has been to promote quadrature domains in computational studies as ideal for obtaining explicit results.

\section{A lower bound for the commutator in the case of Bergman space}\label{sec:berg}

Consider the Riemann surface $G_\phi$ formed by sheets over $\phi(G)$, such
that $\phi^{-1}$ can be defined to be one-to-one on the Riemann surface.
More formally, the Riemann surface in question is just $G$ with the
metric $ds = |\phi'(z)| |dz|$.  An arc $\gamma$ on this Riemann
surface has length equal to the length of $\phi(\gamma)$, and a region 
$U$ has area equal to the area of $\phi(U)$ (where overlaps are
counted multiple times).

In this section, we prove Theorem \ref{thm:lambda},
and subsequently give alternative lower estimates.
Recall the statement of the Theorem.
\begin{thmlambda}
\thmlambdatext
\end{thmlambda}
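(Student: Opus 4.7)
I start from the identity
\[
\langle [T_\phi^*, T_\phi] f, f\rangle = \|\phi f\|^2 - \|P(\bar\phi f)\|^2 = \mathrm{dist}(\bar\phi f, A^2(G))^2,
\]
where $P$ is the Bergman projection on $A^2(G)$, and produce a lower bound on the distance by pairing against test elements of $A^2(G)^\perp$.  A useful such family comes from real $v \in W^{1,2}_0(G)$: Stokes' theorem shows $\partial v/\partial z \in A^2(G)^\perp$ (for $g \in A^2(G)$, $\int_G (\partial v/\partial z)\bar g\,dA = 0$ by the holomorphy of $g$ and the vanishing of $v$ on $\partial G$).  Integration by parts, using $\partial(\bar\phi f)/\partial\bar z = \overline{\phi'}f$, gives $\langle \bar\phi f, \partial v/\partial z\rangle = -\int_G \overline{\phi'}f v\,dA$, and for real $v$ one has $\|\partial v/\partial z\|^2 = \tfrac14\int_G|\nabla v|^2\,dA$.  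Combining,
\[
\|[T_\phi^*, T_\phi]\| \ge \frac{4\bigl|\int_G \overline{\phi'} f v\,dA\bigr|^2}{\|f\|^2_{L^2(G)}\int_G|\nabla v|^2\,dA}.
\]

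Next I pull back to the Riemann surface.  For $W\subset G_\phi$ with $\phi^{-1}(W)$ piecewise $C^1$, take $v(z)=u(\phi(z))$ with $u\in C^1_0(W)$ real, extended by zero, and $f(z)=F(\phi(z))\phi'(z)$ with $F$ holomorphic on $G_\phi$.  The change of variable $dA_w=|\phi'|^2\,dA_z$ gives $\|f\|^2=\|F\|^2_{L^2(G_\phi)}$, $\int_G \overline{\phi'}fv\,dA_z=\int_W Fu\,dA_w$, and $\int_G|\nabla v|^2\,dA_z = \int_W|\nabla u|^2\,dA_w$ (conformal invariance of the Dirichlet energy under the locally univalent $\phi$), producing
\[
\|[T_\phi^*,T_\phi]\|\ge \frac{4\,|\int_W F u\,dA_w|^2}{\|F\|^2_{L^2(G_\phi)}\int_W|\nabla u|^2\,dA_w}.
\]

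Finally, choose $u$ to be the first Dirichlet eigenfunction of $W$, normalized so $\int_W u^2=1$, so that $\int_W|\nabla u|^2=\lambda_W$; it remains to pick $F$ holomorphic on $G_\phi$ so that $|\int_W Fu|^2/\|F\|^2_{L^2(G_\phi)}\ge 4\pi/(\lambda_W\,\mathrm{Area}(W))$.  In the disk case $W=G_\phi=\DD$ with $F\equiv 1$, direct computation with $u(z)=c J_0(j_0|z|)$ realizes exactly this ratio and recovers $16\pi/(\lambda_W^2\,\mathrm{Area}(W))=16/j_0^4\approx 0.48$, close to the exact value $\|[T_z^*,T_z]\|_{A^2(\DD)}=1/2$.  \textbf{The main obstacle} is establishing the ratio bound for general $W\subsetneq G_\phi$: the constant $F\equiv 1$ gives $\|F\|^2=\mathrm{Area}(G_\phi)\ge\mathrm{Area}(W)$, so one must either construct a holomorphic $F$ on $G_\phi$ whose $L^2$-mass concentrates on $W$ (for example via a reproducing-kernel/Bergman-projection argument, selecting $F=P_{A^2(G_\phi)}(u\cdot 1_W)$), or invoke a domain-monotonicity principle $\|[T_w^*,T_w]\|_{A^2(G_\phi)}\ge\|[T_w^*,T_w]\|_{A^2(W)}$ that reduces the claim to the case $W=G_\phi$ applied inside $A^2(W)$, combined with a Polya-type rearrangement inequality such as $(\int_W u_1)^2 \ge (4\pi/\lambda_W)\int_W u_1^2$ for the first Dirichlet eigenfunction (tight for the disk by the Bessel-function identity above).
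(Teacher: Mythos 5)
Your strategy is essentially identical to the paper's: the self-commutator is characterized as a squared distance, one pairs against $\partial_z\psi$ with $\psi$ real in $W^{1,2}_0$ supported on $V=\phi^{-1}(W)$, integrates by parts, pulls everything back to $G_\phi$, and picks $\psi=u\circ\phi$ with $u$ the first Dirichlet eigenfunction of $W$. The only place you stop short of the paper is in not naming what you call a ``Polya-type rearrangement inequality'': the bound $\|u_1\|_1 / \|u_1\|_2 \ge 2\sqrt{\pi}/\sqrt{\lambda_W}$ you state at the end is exactly the Payne--Rayner reverse H\"older inequality, which the paper cites from Carroll--Ratzkin and which completes the argument once you commit to $F\equiv 1$ (equivalently $h=\phi'/\|\phi'\|_2$). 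So the disc computation you did by hand with $J_0$ is simply the equality case of the lemma you needed.

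The concern you flag as ``the main obstacle,'' however, is real and is in fact not resolved in the paper's own proof. With $h=\phi'/\|\phi'\|_2$ the paper's chain of inequalities gives
\[
\|[T_\phi^*,T_\phi]\| \ge \frac{16\pi}{\lambda_W^2\,\|\phi'\|_{L^2(G)}^2} = \frac{16\pi}{\lambda_W^2\,\mathrm{Area}(G_\phi)},
\]
with $\mathrm{Area}(G_\phi)$ in the denominator, not $\mathrm{Area}(W)$. The paper passes from its display labeled (\ref{quotient}) to the stated conclusion without accounting for this; the replacement of $\mathrm{Area}(G_\phi)$ by the smaller quantity $\mathrm{Area}(W)$ is not justified by the argument as written. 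Neither of the repairs you suggest (a holomorphic $F$ with $L^2$-mass concentrated on $W$, or a domain-monotonicity principle for the commutator norm) appears in the paper. For the applications drawn from the theorem (Faber--Krahn and Saint-Venant) one takes $W=G_\phi$, where the two versions agree, so the discrepancy does no harm there, but it does affect the theorem's claim for proper $W\subsetneq G_\phi$ (which is precisely the case the paper invokes to avoid assuming $\partial G$ is piecewise $C^1$). In short: same approach, you correctly identified the remaining lemma without naming it, and the gap you were worried about is a genuine one shared by the paper's proof.
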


\begin{proof}[Proof of Theorem \ref{thm:lambda}]

We will first consider the case where $\phi$ is univalent, since this is somewhat simpler.  
Then $G_\phi$ is just $\phi(G)$.  
Let $W \subset \phi(G)$.  Let $V = \phi^{-1}(W)$.

Let $P_{A^2(G)}:L^2(G) \rightarrow A^2(G)$ be the Bergman projection. 
The first part of the proof relies on standard ideas from functional analysis 
(cf. \cite{Khav} and \cite{AxlerShapiro83}).

Similarly to the previous section, recall that for $\phi$ analytic, $T^*_{\phi} g = P_{A^2(G)}( \bar{\phi} g )$. 
Indeed, for any $g,h \in A^2(G)$, we have 
$$ \la T_{\phi}^* g ,  h \ra =  \la g , T_{\phi} h \ra = \la g , \phi h \ra = \la \bar{\phi} g , h \ra = \la P_{A^2(G)}( \bar{\phi} g) , h \ra .$$

Since $[T_{\phi}^*,T_{\phi}]$ is a positive normal operator on $A^2(G)$ we have \cite[Theorem 12.25]{Rudin}, 
$$||[T_{\phi}^*,T_{\phi}]|| = \sup_{h \in A^2, ||h||_2 = 1} \la [T_{\phi}^*,T_{\phi}] h , h \ra ,$$
and fixing $h \in A^2(G)$ with $||h||_2 = 1$, we have

\begin{align*}
 \la (T_{\phi}^* T_{\phi}-T_{\phi} T_{\phi}^*) h , h \ra &= ||T_{\phi}h||^2 - ||T_{\phi}^*h||^2 \\
 &= ||\phi h||^2 - ||P_{A^2(G)}(\bar{\phi} h)||^2 \\
 &= ||\bar{\phi} h||^2 - ||P_{A^2(G)}(\bar{\phi} h)||^2.
\end{align*}

This last expression is the square of the $L^2$-distance from $\bar{\phi} h$ to the space $A^2(G)$.
Thus, 
$$||[T_{\phi}^*,T_{\phi}] ||  = \sup_{||h||_2 = 1} \{\inf_{f \in A^2} ||\bar{\phi} h - f ||_2\}^2.$$

Taking $h = \frac{\phi'}{\|\phi' \|_2}$ gives
$$||[T_{\phi}^*,T_{\phi}] ||  \geq  \{\inf_{f \in A^2} ||\bar{\phi}\phi' - f ||_2\}^2 
\frac{1}{\| \phi' \|_2^2}.$$
Note that $\|\phi'\|_2^2 = \text{Area}(\phi(G))$. 

By duality \cite[Chapter 4]{Rudin},
$$||\bar{\phi}\phi'  - f ||_2 = 
\sup_{||g||_2 = 1} \left|\int_G(\bar{\phi}\phi' -f)\bar{g}\,dA \right|.$$

Since we are taking a supremum, any choice of $g$ gives us a lower bound,
but we want to make a careful choice.
First of all, we want to choose $g$ so that the right-hand-side does not depend on $f$.
This means that we want to choose $g$ so that $\int_G f \bar{g}\, dA = 0$.
Accordingly, we will choose $g = \partial_z \psi / ||\partial_z
\psi||_2$, where $\psi$ is in $W^{1,2}(V)$, vanishes on
$\p V$ and on $G \setminus V$,
and will be specified later.  Here 
$W^{1,2}(V)$ denotes the Sobolev space of functions in 
$L^2$ whose distributional derivatives are also in $L^2$.

Let us check that such a choice is in the orthogonal complement to $A^2(G)$ as desired
(this is the easy direction of Havin's Lemma which also provides a converse \cite{Shapiro92}).

First note that if $\phi$ is a $C^\infty$ function with compact support inside $G\setminus V$, then 
\[
\int_G g \phi\, dA = - \frac{1}{\| \partial_z \psi \|_2} \int_G \psi \partial_z \phi \, dA = 0
\]
 since $\phi$ and $\psi$ have disjoint support.
Thus, $g = 0$ in $G \setminus V$. 
We also have that, if $f$ is analytic,
\[
\int_G f \bar{g}\, dA = \int_V f \bar{g} \, dA = 
\frac{1}{\| \partial_z \psi \|_2} \int_V f \overline{\partial_z \psi} \, dA =
\frac{1}{2i} \frac{1}{\| \partial_z \psi \|_2} \int_{\partial V} f \bar{\psi} \, dz = 0,
\]
where we have used the fact that $V$ is piecewise $C^1$ when 
applying the Cauchy-Green formula.

Then 
\begin{equation}\label{eq:elimf}
 \left|\int_G(\bar{\phi}\phi'-f)\bar{g}dA \right| = \left|\int_G
   \overline{\phi g}\phi'  dA \right|= \frac{1}{||\partial_z \psi ||_2}
 \left|\int_V \phi' \ol{\phi} \ol{\partial_z \psi} dA \right|.
\end{equation}

By Green's Theorem, 
\begin{align*}
\int_V  \phi' \ol{\phi'}\ol{\psi} + \phi'\ol{\phi} \ol{\partial_z
  \psi}\,  dA &= \int_V \phi' \ol{\partial_z (\phi \psi)}\, dA \\
  &= \int_V \phi' \partial_{\ol{z}} (\ol{\phi \psi})\, dA \\
  &= \frac{1}{2i}\int_{\partial V}  \phi' \ol{\phi \psi}\, dz = 0,
\end{align*}
where the last integral is zero because $\psi$ vanishes on $\partial V$.

Thus, $$\int_V \bar{\phi} \phi' \ol{\partial_z \psi}  dA = 
 -\int_V  \phi '\ol{\phi'}\bar{\psi} dA ,$$
and (\ref{eq:elimf}) becomes
\begin{equation}\label{eq:prelim}
\frac{1}{||\partial_z \psi ||_2} \left|\int_V \phi' \ol{\phi'}\bar{\psi} dA \right| = \frac{1}{||\partial_z \psi ||_2} \left|\int_V |\phi'|^2 \psi \, dA \right| .
\end{equation}

Now choose $\psi(z) = \psi_{W}(\phi(z))$, where 
$\psi_W$ denotes the first eigenfunction of the Laplacian for the
domain $W$.  If $\phi(z) \not \in W$, define $\psi(z)=0$. 

Note that 
\[
\| \partial_z \psi_{W}(\phi(z)) \|_2^2 = 
\int_V |\{[\partial_z \psi_{W}](\phi(z))\} \phi'(z)|^2 dA = 
\int_{W} |\partial_z \psi_{W}(w)|^2 dA,
\]
using the chain rule and a change of variables.  By the definition of $\psi_W$, this last 
integral is finite, so
this calculation shows that $\partial_z \psi$ is in 
$L^2$.  Since $\psi$ is real valued, this means $\partial_x \psi$ and 
$\partial_y \psi$ are in $L^2,$ so by the Poincar\'{e} inequality
(see \cite{Evans}),
 $\psi$ is in 
$L^2$.  Thus, $\psi$ is in $W^{1,2}$, as claimed, and the integrals in our 
calculation are well defined.  

Also, under a change of variables,
\[
\int_G \psi_{W}(\phi(z)) |\phi'(z)|^2 \, dA = \int_{W}
\psi_{W} (w) \, dA. 
\]

So (\ref{eq:prelim}) becomes 
\begin{equation}\label{quotient}
\frac{1}{\|\partial_z \psi_{W}\|_2} \left| \int_{W} \psi_{W} dA \right|.
\end{equation}
(Note, the $L^2$ norm is over $W$).

The reason for this choice is that the first eigenfunction
$\psi_{W}$ 
solves a variational problem, minimizing the Rayleigh quotient,
and $||\partial_z \psi_{W} ||_2 = (1/2) || \nabla \psi_{W}
||_2 = 
\sqrt{\lambda} / 2 || \psi_{W} ||_2$, where $\lambda$ is the first eigenvalue of the Laplacian.

Thus, with this choice,
\[
\frac{1}{\|\partial_z \psi_{W}\|_2} \left| \int_{W}
\psi_{W} dA \right| = 
\frac{2 || \psi_{W} ||_1}{\sqrt{\lambda} ||\psi_{W} ||_2} ,
\]
where we have used the fact that the first eigenfunction $\psi_W$ is non-negative \cite[p. 24]{PolyaSzego}.

Now we use the ``reverse H\"older's inequality'' of Payne and Rayner \cite{PayneRayner} (see also \cite{CarrollRatzkin2012}), 
which gives
\[
\frac{ \| \psi_{W} \|_1}{ \| \psi_{W} \|_2} \ge
\frac{2\sqrt{\pi}}{\sqrt{\lambda}}.
\]
Then this gives 
\[
\frac{2 || \psi_{W} ||_1}{\sqrt{\lambda} ||\psi_{W} ||_2} \ge 
\frac{4 \sqrt{\pi}}{\lambda},
\]
which implies that 
\[
||[T_{\phi}^*,T_{\phi}] || \ge 
\frac{4^2 \pi}{\lambda^2 \text{Area}(\phi(G))}
\]
where $\lambda$ is the first eigenvalue of the Laplacian on
$W$. 

If $\phi$ is locally univalent in $G$ the same proof holds, 
with $G_\phi$ in place of $\phi(G)$.  The inequality of Payne and Rayner 
still applies since $G_\phi$ has curvature $0$ everywhere.  (The
inequality does not necessarily apply if $\phi$ is not locally
univalent, since then $G_\phi$ has branch points, which is where we
need the assumption of local univalence). 
\end{proof}

In the proof above, we chose $\psi_W$ to minimize a Rayleigh quotient
$$\frac{|| \nabla \psi_{W}||_2}{|| \psi_{W} ||_2},$$
i.e., to maximize the quotient
$$\frac{|| \psi_{W} ||_2}{|| \nabla \psi_{W}||_2}.$$

We can get a more direct estimate if we instead choose $\psi_W$ to be the \emph{stress function} \cite[p. 24]{PolyaSzego} that maximizes the quotient
$$\left( \frac{ 2 || \psi_{W} ||_1}{|| \nabla \psi_{W}||_2} \right)^2,$$
over continuously differentiable functions $\psi_W$ vanishing on the boundary of $W$.
Then the inequality
\begin{equation*}
||[T_{\phi}^*,T_{\phi}] || \ge 
\left( \frac{ 2 \left|\int_W \psi_W \, dA \right|}{|| \nabla \psi_W||_2} \right)^2 \frac{1}{\text{Area}(\phi(G))}
\end{equation*}
is still true by the same argument as in the above proof.
Since $\psi_W$ is superharmonic and has vanishing boundary values \cite[p. 24]{PolyaSzego},
it is non-negative.
This implies $ \left|\int_W \psi_W \, dA \right| = || \psi_{W} ||_1$,
giving
\begin{equation*}
||[T_{\phi}^*,T_{\phi}] || \ge \left( \frac{ 2 || \psi_{W} ||_1}{|| \nabla \psi_{W}||_2} \right)^2 \frac{1}{\text{Area}(\phi(G))},
\end{equation*}
which can be stated more simply in terms of the torsional rigidity $\rho_W$.

This gives an alternative form of Theorem \ref{thm:lambda}.  Here the requirement that 
$\phi$ be locally univalent is not needed.

\begin{thmtorsion}
\thmtorsiontext
\end{thmtorsion}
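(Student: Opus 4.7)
The plan is to recycle the first part of the proof of Theorem \ref{thm:lambda}. As recorded in the remark immediately preceding the statement, that proof actually established the stronger intermediate bound
\[
\|[T_\phi^*, T_\phi]\| \ge \left(\frac{2 \|\psi\|_1}{\|\nabla\psi\|_2}\right)^2 \frac{1}{Area(W)}
\]
for any admissible test function $\psi$ vanishing on $G \setminus V$ and on $\partial V$, where $V = \phi^{-1}(W)$, before the specialization $\psi = \psi_W \circ \phi$ and the Payne-Rayner inequality were invoked. Theorem \ref{thm:torsion} then follows by leaving $\psi$ free and recognizing the supremum of the quotient on the right as the variational characterization of the torsional rigidity.

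Concretely, I would first rerun the Hahn-Banach duality step, the choice $g = \partial_z \psi / \|\partial_z \psi\|_2$, and the Green's theorem integration by parts, exactly as in the previous proof, to reach the intermediate bound above. For each $\tilde\psi \in W_0^{1,2}(W)$, I then set $\psi := \tilde\psi \circ \phi$ on $V$ (extended by zero), which is admissible. Conformal invariance of the Dirichlet energy gives $\|\nabla \psi\|_{L^2(V)} = \|\nabla \tilde\psi\|_{L^2(W)}$, and the change of variables $dA_w = |\phi'|^2 dA_z$ identifies the weighted $L^1$ quantity $\int_V |\phi'|^2 \psi\, dA$ (which is what actually shows up in the derivation of the intermediate bound) with $\int_W \tilde\psi\, dA = \|\tilde\psi\|_{L^1(W)}$. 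Substituting and taking the supremum over $\tilde\psi$ then gives
\[
\|[T_\phi^*, T_\phi]\| \ge \frac{1}{Area(W)} \sup_{\tilde\psi \in W_0^{1,2}(W)} \frac{4\|\tilde\psi\|_{L^1(W)}^2}{\|\nabla\tilde\psi\|_{L^2(W)}^2} = \frac{\rho_W}{Area(W)},
\]
where the last equality is the classical variational formulation of the torsional rigidity, equivalent to the Dirichlet principle for $-\Delta u = 1$ in $W$ with zero boundary data, giving $\rho_W = 4 \int_W u \, dA$. Finally, taking the supremum over $W \subset \phi(G)$ yields the stated bound.

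The main subtlety is bookkeeping around the factor $|\phi'|^2$ in the change of variables, and verifying that the pullback $\psi$ is genuinely admissible in the appropriate Sobolev class with the correct boundary behavior---particularly in the multi-sheeted case, where $\phi^{-1}(W)$ may split into several components and $\tilde\psi$ must be pulled back to each sheet. No genuine geometric isoperimetric inequality such as Payne-Rayner is invoked, so no curvature argument on $G_\phi$ enters; this is consistent with the authors' observation that local univalence of $\phi$ is not strictly necessary for this result, despite being assumed in the statement.
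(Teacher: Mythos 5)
Your proposal follows the paper's proof in essence: the authors prove Theorem \ref{thm:torsion} by observing (in the remark preceding the restatement) that the proof of Theorem \ref{thm:lambda} already established the bound
\[
\|[T_\phi^*,T_\phi]\|\ge\Bigl(\tfrac{2\|\psi\|_1}{\|\nabla\psi\|_2}\Bigr)^2\frac{1}{\mathrm{Area}},
\]
for any admissible $\psi$, and then they simply choose $\psi$ to realize the torsional-rigidity extremal problem instead of the Rayleigh quotient. Your reconstruction --- rerun Hahn--Banach duality, pick $g=\partial_z\psi/\|\partial_z\psi\|_2$, integrate by parts via Green, pull back a test function $\tilde\psi\in W^{1,2}_0(W)$ through $\phi$ using conformal invariance of the Dirichlet integral and the change of variables $dA_w=|\phi'|^2\,dA_z$, then take the supremum --- is exactly this. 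You also correctly anticipate the paper's note that Payne--Rayner (hence local univalence/curvature of $G_\phi$) plays no role here.

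One subtlety that you take on faith, because you import the intermediate bound verbatim, is the identity of the area in the denominator. Rerunning the proof of Theorem \ref{thm:lambda} literally, the normalization of the test vector $h=\phi'/\|\phi'\|_{L^2(G)}$ contributes the factor $\|\phi'\|_{L^2(G)}^2=\mathrm{Area}(G_\phi)$, which equals $\mathrm{Area}(\phi(G))$ (with multiplicity), not $\mathrm{Area}(W)$. The two agree only when one takes $V=G$, i.e.\ $W=\phi(G)$, which is what is used for the Saint-Venant and Faber--Krahn applications, so the headline consequences are unaffected. But for a proper subregion $W\subsetneq\phi(G)$ the derivation as written yields $\rho_W/\mathrm{Area}(G_\phi)$, which is weaker than the stated $\rho_W/\mathrm{Area}(W)$. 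If you want the stronger form with $\mathrm{Area}(W)$, you would need an additional argument (e.g.\ a localization of $h$ toward $V$, or a monotonicity statement for the commutator norm under domain restriction), neither of which appears in the paper or in your writeup. Since the discrepancy originates already in Theorem \ref{thm:lambda}'s last line, this is a shared issue rather than a new gap you introduced, but a careful proof of Theorem \ref{thm:torsion} should either track this denominator or explicitly restrict to $W=\phi(G)$.
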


\noindent {\bf Note:}  The hypothesis that $\phi$ is locally univalent can be removed from the assumptions in Theorem \ref{thm:lambda} 
if we require that $W$ does not contain any branch points of $\phi^{-1}$.

We now state a purely geometric bound in the case when $\phi$ is univalent.
The radius $R_I$ of the largest disc contained in a domain is called the \emph{inradius} of the domain.

\begin{cor}\label{cor:geo}
 Under the same hypothesis as Theorem \ref{thm:torsion} and with the additional assumption that $\phi$ is univalent in $G$,
we have the lower bound
\[
||[T_{\phi}^*,T_{\phi}] || \ge
\frac{\pi R_I^4}{2 \text{Area}(\phi(G))} 
\]
where $R_I$ is the inradius of $\phi(G)$.  
\end{cor}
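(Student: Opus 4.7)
The plan is to apply Theorem \ref{thm:torsion} with $W$ chosen to be a disc of maximal radius inscribed in $\phi(G)$. Since $\phi$ is univalent, such a disc $W \subset \phi(G)$ lifts under $\phi^{-1}$ to a smooth (in particular piecewise $C^1$) subdomain of $G$, so $W$ is an admissible choice in the supremum appearing in the statement of Theorem \ref{thm:torsion}.

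The key computation is then simply to evaluate $\rho_W / \mathrm{Area}(W)$ for a disc of radius $R_I$. Using the variational characterization
\[
\rho_W \;=\; \sup_{\psi}\; \frac{4\bigl(\int_W \psi\, dA\bigr)^2}{\int_W |\nabla \psi|^2\, dA},
\]
highlighted in the paragraph preceding Theorem \ref{thm:torsion}, I would take the trial function $\psi(r) = R_I^2 - r^2$ on the disc (which is, up to a factor, the torsion function itself). Routine polar-coordinate integration gives $\int_W \psi\, dA = \pi R_I^4/2$ and $\int_W |\nabla \psi|^2\, dA = 2\pi R_I^4$, so that the quotient equals $\pi R_I^4/2$. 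Since this $\psi$ solves the associated Euler--Lagrange equation $\Delta \psi = -2$ with $\psi|_{\partial W} = 0$, it is the extremizer, giving
\[
\rho_W \;=\; \frac{\pi R_I^4}{2}.
\]

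Combining this with $\mathrm{Area}(W) = \pi R_I^2$ yields $\rho_W/\mathrm{Area}(W) = R_I^2/2$, and Theorem \ref{thm:torsion} then gives the claimed inequality $\|[T_\phi^*, T_\phi]\| \ge R_I^2/2$. There is no real obstacle: the univalence hypothesis is exactly what makes $\phi^{-1}(W)$ a well-behaved subset of $G$, and the evaluation of the torsional rigidity of a disc is classical. The only thing one might want to be careful about is justifying that $\psi = R_I^2 - r^2$ actually achieves the supremum (as opposed to merely giving a lower bound); but in fact, since any lower bound on $\rho_W$ suffices to lower-bound $\|[T_\phi^*, T_\phi]\|$ via Theorem \ref{thm:torsion}, even the inequality $\rho_W \ge \pi R_I^4/2$ obtained from this trial function is enough to conclude the corollary.
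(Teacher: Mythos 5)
Your proof is correct and takes essentially the same approach as the paper: choose $W$ to be the largest inscribed disc in $\phi(G)$, use $\rho_W = \pi R_I^4/2$, and apply Theorem \ref{thm:torsion}. The only difference is that the paper simply cites the torsional rigidity of a disc as a known fact, while you derive it from the variational characterization via the explicit torsion function $\psi = R_I^2 - r^2$ (and you correctly note that even the trial-function lower bound $\rho_W \ge \pi R_I^4/2$ would suffice).
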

The bound 
\[
||[T_{\phi}^*,T_{\phi}] || \ge
\frac{\pi R_I^4}{2 \text{Area}(G_\phi)} 
\]
still holds if we remove the assumption of the univalence of $\phi$, as long
as we require that the discs in the definition of the inradius do not
contain any branch point of $G_\phi$. 

\begin{proof}
Take $W$ to be the largest disc contained in $\phi(G)$.
Then $\rho_W = \frac{\pi R_I^4}{2}$, and applying Theorem \ref{thm:torsion},
\[
||[T_{\phi}^*,T_{\phi}] || \ge
\frac{\pi R_I^4}{2 \text{Area}(\phi(G))}
\]
\end{proof}

\section{Concluding remarks}

{\bf 1.}  The lower bound in Theorem \ref{thm:torsion} seems to be sharp,
since in the case of the unit disc with $\phi = z$
the square of the distance of $\bar{z}$ from $A^2(\DD)$
$$\text{dist}^2(\bar{z}, A^2(\DD)) = 1/2$$
(see \cite[Remark 4.3]{DimaZdenka}).
Since the constant in inequality (\ref{eq:SV}) is off by a factor of $2$ (compared to Saint-Venant's),
we expect that the constant in Putnam's inequality is not sharp in the setting of this paper 
(although it is sharp in general and in the restricted setting of Smirnov space \cite{Khav}).

We conjecture that for a Toeplitz operator with analytic symbol $\phi$
acting on the Bergman space of $G$, the following improved version of Putnam's inequality holds
\[
||[T_{\phi}^*,T_{\phi}] || \leq \frac{\text{Area}(\phi(G))}{2 \pi} .
\]

\noindent {\bf Added in press:} Recently, J-F. Olsen and M. C. Reguera \cite[Thm. 1, Cor. 1]{OlsenReguera2013} proved our conjecture. 
Along with our Theorem 1.2, this provides the following operator theoretic ``isoperimetric sandwich'', in this case recovering Saint-Venant's
isoperimetric inequality (as opposed to the classical isoperimetric inequality appearing in Khavinson's study):
$$ \frac{\rho}{\text{Area}(G)} \leq ||[T_{z}^*,T_{z}] || \leq \frac{\text{Area}(G)}{2 \pi}.$$

{\bf 2.} It would be interesting to extend this study to spaces of analytic functions on domains in $\CC^n$.
Consider the Bergman space $A^2(\Omega)$ of a pseudoconvex domain $\Omega \subset \CC^n$.
In order to illustrate how the ideas might proceed, 
let us work out the example of a Toeplitz operator $T_{\phi}: A^2(\Omega) \rightarrow A^2(\Omega)$
with a linear function $\phi(z) = \sum_{j=1}^n a_j z_j$ as its symbol, where $a_j$ are complex constants.

With the goal of computing a lower bound for the norm of the commutator,
following the first part of the proof of Theorem \ref{thm:lambda} leads to the same formula
$$||[T_{\phi}^*,T_{\phi}] ||  = \sup_{||h||_2 = 1} \{\inf_{f \in A^2} ||\bar{\phi} h - f ||_2\}^2.$$

Taking $h = \frac{1}{\text{Vol}(\Omega)}$ gives
\begin{equation}\label{eq:scv0}
||[T_{\phi}^*,T_{\phi}] ||  \geq  \{\inf_{f \in A^2} ||\bar{\phi} - f ||_2\}^2 
\frac{1}{\text{Vol}(\Omega)}.
\end{equation}

Again by duality,
\begin{equation}\label{eq:scv}
 ||\bar{\phi} - f ||_2 = 
\sup_{||g||_2 = 1} \left|\int_{\Omega}(\bar{\phi}-f)\bar{g}\, dV \right|,
\end{equation}
where $dV = dA(z_1) \wedge ... \wedge dA(z_n)$.

As in the proof of the Theorem, we choose $g$ in the orthogonal complement of the Bergman space.
We are guided by Rosay's Lemma (a version of Havin's Lemma in $\CC^n$, see \cite{Bell93})
describing the orthogonal complement of $A^2(\Omega)$.
Namely, functions in $A^2(\Omega)^{\perp}$ are of the form $\vartheta \alpha$,
where $\alpha = \sum_{j=1}^{n} \alpha_j d\bar{z_j}$ is a $(0,1)$-form
of $C^{\infty}$ functions $\alpha_j$ vanishing on the boundary of $\Omega$,
and $\vartheta$ is the formal adjoint of the $\bar{\p}$-operator,
defined on $(0,1)$-forms by $\vartheta \alpha$ = $\sum_{j=1}^n\frac{\p \alpha_j}{\p z_j}$  (a complex ``divergence'' operator).

Choosing $g = \frac{\vartheta \alpha}{||\vartheta \alpha||_2}$ as indicated above, (\ref{eq:scv}) becomes
\begin{equation}\label{eq:scv2}
 \frac{\left|\int_{\Omega}\overline{\phi \vartheta \alpha} \, dV \right|}{||\vartheta \alpha||_2}.
\end{equation}

Let's take $\alpha_j = \bar{a_j}\psi$, where $\psi \in C^{\infty}(\Omega)$ is real, positive, and vanishes on the boundary of $\Omega$.
Integrating by parts in the numerator of (\ref{eq:scv2}) gives
\begin{equation}\label{eq:scv3}
\sum_{j=1}^n  |a_j|^2 \frac{  ||\psi||_1 }{||\vartheta \alpha||_2}.
\end{equation}

\noindent {\bf Remark:} By analogy with the case of one variable, 
it should be clear that we next want to choose $\psi$ to maximize this quotient (\ref{eq:scv3}).
Perhaps extremal problems like this could give one approach for defining ``torsional rigidity'' for domains in $\CC^n$.

In order to make some concrete calculation, let us further specialize and take for example the polydisc $\Omega = \DD^n$ as our domain.
For the purpose of making the quotient (\ref{eq:scv3}) large, 
the choice 
$$\psi = \prod_{j=1}^n (1-|z_j|^2)$$ 
seems to be reasonable (but perhaps not optimal).

In the denominator of (\ref{eq:scv3}), the off-diagonal terms in the product $\vartheta \alpha \overline{\vartheta \alpha }$ integrate to zero.
Reversing the order of integration and summation and moving the constant outside each integral, this leaves
$$||\vartheta \alpha||_2 = \sqrt{ \sum_{j=1}^n |a_j|^2 \int_{\DD^n} \left(  |z_j|^2 \prod_{k \neq j} (1 - |z_k|^2)^2\right)  dV } .$$

Each integral in the sum can be calculated as an iterated integral using polar coordinates over each disc,
and (\ref{eq:scv2}) becomes
$$ \frac{ \sum_{j=1}^n |a_j|^2 \left(\frac{\pi}{2}\right)^n }{ \sqrt{\sum_{j=1}^n |a_j|^2 \frac{\pi^n}{2 \cdot 3^{n-1}}} }.$$
Now, also using $\text{Vol}(\Omega)= \pi^n$, (\ref{eq:scv0}) becomes
$$  ||[T_{\phi}^*,T_{\phi}] || \geq \frac{3^{n-1}}{2^{2n-1}} \sum_{j=1}^n |a_j|^2.$$
To apply Putnam's inequality, we note that the spectrum of $T_{\phi}$ is the disc of radius $\sum_{j=1}^n|a_j|$, so we have
\[
||[T_{\phi}^*,T_{\phi}] || \leq \frac{(\sum_{j=1}^n|a_j|)^2 \pi}{\pi} = \left( \sum_{j=1}^n|a_j| \right)^2 \leq n  \sum_{j=1}^n|a_j|^2,
\]
where the last inequality is a direct application of the Cauchy-Schwarz inequality to the vectors $(1,1,..,1)$ and $(|a_1|,|a_2|,..,|a_n|)$ .

Thus, the commutator of multiplication by a linear function on the Bergman space of the polydisc can be compared from above and below to
$S_a := \sum_{j=1}^n |a_j|^2,$

$$ \frac{3^{n-1}}{2^{2n-1}} S_a  \leq ||[T_{\phi}^*,T_{\phi}] || \leq n  S_a .$$

{\bf 3.} Some aspects of Section \ref{sec:dqd} resemble \cite{Bolt} (cf. \cite{Dostanic}) 
where the spectrum of the Kerzman-Stein operator was studied in terms of the geometry of the domain.
The ellipse was selected as a simple non-trivial specimen, 
but in light of the above, perhaps a double quadrature domain would give more explicit results.

{\bf 4.} Throughout, we have considered only Toeplitz operators with analytic symbol.
Some of the ideas in Section \ref{sec:dqd} can be extended to the operator 
$T_H$ defined by $T_H f = P (H \cdot f)$, where $H(z)$ is the complex unit tangent vector of $\partial G$.
(In general, $H(z)$ does not have an analytic extension to all of $G$.)
Like the operator $T_z$ (multiplication by $z$), $T_H$ reduces to the shift operator when $G$ is a disc.

For the commutator of $T_H$, 
we have 
$$[T_H^*,T_H] f = P_{E^2(G)}( \bar{H} P_{E^2(G)}( H f )) - P_{E^2(G)}( H P_{E^2(G)}(\bar{H} f ) ).$$
Suppose $G$ is an arclength quadrature domain.
Then $H$, and hence $\bar{H} = 1 / H$, has a meromorphic extension to $G$.
Choose a polynomial $t(z)$ that cancels the poles of both $H$ and $\bar{H}$, 
and let $q$ be any polynomial.
Then 
$$P_{E^2(G)}( \bar{H} P_{E^2(G)}( H t q ) )= P_{E^2(G)}( \bar{H} H t q ) = \bar{H} H t q ,$$ 
and similarly 
$$P_{E^2(G)}( H P_{E^2(G)}(\bar{H} t q ) ) = \bar{H} H t q ,$$ 
so the commutator vanishes.
Decomposing each polynomial with respect to $t(z)$ as in Section \ref{sec:dqd},
we see that $[T_H^*,T_H]$ has finite rank.

\bibliographystyle{amsplain}

\end{document}